\documentclass[11pt]{article}
\usepackage{fullpage}

\date{}

\usepackage{tikz}
\usetikzlibrary{calc}
\usetikzlibrary{matrix}
\usepackage{amsmath,amssymb,amsfonts,amsthm,subfig}
\usepackage[pdftex]{hyperref} 

\usepackage{graphicx}
\allowdisplaybreaks

\newcommand{\V}[1]{\mbox{\boldmath $ #1 $}}

\def \p{\partial}

\newcommand{\bey}{\begin{eqnarray}}
\newcommand{\eey}{\end{eqnarray}}

\newcommand{\beq}{\begin{equation}}
\newcommand{\eeq}{\end{equation}}
\theoremstyle{plain}% default
\newtheorem{thm}{\hspace{6mm}Theorem}[section]
\newtheorem{pro}{\hspace{6mm}Proposition}[section]
\newtheorem{lem}{\hspace{6mm}Lemma}[section]

\theoremstyle{definition}

\theoremstyle{remark}

\newtheorem{rem}{\hspace{6mm}Remark}[section]

\def\nablat{\tilde{\nabla}}

\title{Mesh Sensitivity Analysis for Finite Element Solution of Linear Elliptic Partial Differential Equations}

\vskip 4mm

\author{Yinnian He%
\thanks{School of Mathematics and Statistics, Xi'an Jiaotong University, Xi'an 710049, China
(heyn@mail.xjtu.edu.cn).}
\and Weizhang Huang%
\thanks{Department of Mathematics, University of Kansas, Lawrence, KS 66045, U.S.A. (whuang@ku.edu).}}

\begin{document}
\vskip 1cm
\maketitle

\begin{abstract}
Mesh sensitivity of finite element solution for linear elliptic partial differential equations
is analyzed. A bound for the change in the finite element solution is obtained in terms of the mesh deformation and its gradient.
The bound shows how the finite element solution changes continuously with the mesh.
The result holds in any dimension and for arbitrary unstructured simplicial meshes, general linear elliptic
partial differential equations, and general finite element approximations.
\end{abstract}

\noindent{\bf AMS 2020 Mathematics Subject Classification.}
65N30, 65N50

\noindent{\bf Key Words.} finite element, elliptic problem, mesh, mesh sensitivity, mesh dependence

\noindent{\bf Abbreviated title.} Mesh Sensitivity Analysis for Finite Element Solution

% section 1
\section{Introduction}

We are concerned with mesh sensitivity for the finite element (FE) solution of
linear elliptic boundary value problems (BVPs).
The finite element method is a well developed method that is widely used in scientific and engineering computation
and has been studied extensively in the numerical analysis community. In particular,  numerous error estimates have
been established for the FE solution of elliptic problems; see, e.g., \cite{AO00,BS94,Cia78}.
Those error estimates (cf. Proposition~\ref{pro:error-1} below) are typically established for a given type of mesh
and show convergence behavior of the error as the mesh is being refined.
They also show the {\it stable} dependence of the FE solution on the mesh in the sense that
the FE solution remains in a neighborhood of the exact solution for all meshes of same type no matter how different they are.
However, those error estimates do not tell if and how the FE solution changes continuously with the mesh.
Although it is commonly believed and numerically verified that the FE solution depends continuously
on the mesh at least for linear elliptic problems, little work has been done so far on the theoretical study of
this mesh sensitivity issue.
To our best knowledge, \cite{HH2016} is the only known work on this issue where a bound for the change
of the linear FE solution resulting from the mesh deformation has been obtained for
one-dimensional elliptic boundary value problems.

The objective of this work is to present an analysis of the mesh sensitivity of the FE solution
to a BVP of genera linear elliptic partial differential equations (PDEs) defined on a polygonal/polyhedral
domain in $\mathbb{R}^d$ ($d \ge 1$).
We consider an arbitrary unstructured simplicial mesh and a general order FE approximation.
We use an approach similar to gradient methods
for optimal control (see, e.g. \cite{BH1975}) and sensitivity analysis in shape optimization (e.g., see \cite{HM2003,SZ92})
where a small deformation in the mesh is introduced and then an FE formulation is derived and bounds are established
for the change in the FE solution resulting from the mesh deformation. The main results are stated in
Theorems~\ref{thm:fem-1} and \ref{thm:fem-2}.

An outline of the paper is as follows. The BVP under consideration and its FE formulation are described in Section~\ref{SEC:FE}.
In Section~\ref{SEC:mesh-deform}, the mesh deformation is introduced and changes in mesh quantities and functions
resulting from the mesh deformation are discussed. The mesh sensitivity of the FE solution is analyzed
in Section~\ref{SEC:sensitivity}. Numerical results are presented in Section~\ref{SEC:numerics} for an example with
smooth and nonsmooth velocity fields. Finally, conclusions are
drown in Section~\ref{SEC:conclusions}.

% section 2
\section{Finite element formulation}
\label{SEC:FE}

We consider the boundary value problem of a general linear elliptic PDE as
\bey
&& - \nabla (a \nabla u) + \V{b}\cdot \nabla u + c u  = f,\quad \mbox{ in } \Omega
\label{bvp-1}
\\
&& u = 0,\qquad \mbox{ on } \p \Omega \label{bc-1}
\eey
where $\Omega$ is a polygonal/polyhedral domain in $\mathbb{R}^d$ ($d \ge 1$) and 
the coefficients $a(\V{x})$, $\V{b}(\V{x})$, $c(\V{x})$,
and $f(\V{x})$ are given functions satisfying
\begin{align*}
& a,\; \V{b},\; c \in W^{1,\infty}(\Omega), \quad f \in W^{1,2}(\Omega),
\\
& a(\V{x}) \ge a_0 > 0, \quad c(\V{x}) - \frac{1}{2} \nabla \cdot \V{b}(\V{x}) \ge 0, \quad \text{ in } \Omega . 
\end{align*}
The derivatives of the coefficients will be needed in the sensitivity analysis (cf. Section~\ref{SEC:sensitivity}).
The coefficients can have lower regularity if only the convergence of the FE solution
is concerned.

Let $V =H_0^1(\Omega)$. The variational formulation of (\ref{bvp-1}) and (\ref{bc-1}) is to find $u \in V$ such that
\beq
\int_{\Omega} \left ( a \nabla u \cdot \nabla \psi  + (\V{b}\cdot \nabla u)\psi  + c u \psi\right ) d \V{x}
= \int_{\Omega} f \psi d \V{x},
\quad \forall \psi \in V.
\label{bvp-2}
\eeq
It can be shown that 
\begin{align}
\int_{\Omega} \left ( a \nabla u \cdot \nabla u  + (\V{b}\cdot \nabla u) u  + c u^2\right ) d \V{x}
\ge a_0 \| \nabla u \|_{L^2(\Omega)}^2, \quad \forall u \in V .
\label{bvp-3}
\end{align}

We consider the FE solution of problem
(\ref{bvp-2}). To this end, we assume that a simplicial mesh $\mathcal{T}_h$ has been given for $\Omega$.
Let $K$ be the generic element of $\mathcal{T}_h$
and $h_K$ and $a_K$ be the diameter (defined as the length of the longest edge) and the minimum height of $K$,
respectively. Here, a height of $K$ is defined as the distance from a vertex to its opposite facet.
The mesh $\mathcal{T}_h$ is said to be regular if there exists a constant $\kappa > 0$ such that
\beq
\frac{h_K}{a_K}\le \kappa, \quad \forall K\in \mathcal{T}_h.
\label{mesh-reg-1}
\eeq
Although it is common to assume that the mesh is regular in FE error analysis (cf. Proposition~\ref{pro:error-1}),
we do not make such an assumption in our current mesh sensitivity analysis. Instead, we only require
$a_K > 0$ for all $K \in \mathcal{T}_h$, which essentially says that all elements must not be degenerate or inverted.

Edge matrices of mesh elements are a useful tool in our analysis.
Denote the vertices of $K$ by $\V{x}_i^K, i = 0, ..., d$. An edge matrix of $K$ is defined as
\[
E_K = [\V{x}_1^K-\V{x}_0^K, ...,\V{x}_d^K-\V{x}_0^K].
\]
It is evident that this definition is not unique, depending on the ordering of the vertices. Nevertheless, 
many geometric properties of $K$, which are independent of the ordering of the vertices, can be computed
using $E_K$. For example, the volume of $K$ can be calculated by $|K| = \det(E_K)/d!$.
Moreover, it is known \cite{LH2017} that
\begin{equation}
E_K^{-T} = [\nabla \phi_1^K, ..., \nabla \phi_d^K],
\label{E-basis-1}
\end{equation}
where $\phi_i^K$ is the linear Lagrange basis function associated with $\V{x}_i^K$. These basis functions satisfy
$\sum_{i=0}^d \phi_i^K = 1$. It is also known that the $i$-th height of $K$ is equal to $1/|\nabla \phi_i^K|$ and thus,
\begin{equation}
a_K = \min_{i} \frac{1}{|\nabla \phi_i^K|}.
\label{aK-1}
\end{equation}

We consider the FE space associated with $\mathcal{T}_h$ as
\[
V_h=\{v\in H^1_0(\Omega)\cap C(\overline{\Omega});\;  v|_K\in P_r(K),~~\forall K\in \mathcal{T}_h\},
\]
where $P_r(K)$ ($r \ge 0$) is the set of polynomials of degree no more than $r$ defined on $K$.
Any function $v_h$ in $V_h$ can be expressed as
\[
v_h = \sum_i v_i \psi_i(\V{x}),
\]
where $\{\psi_1, \psi_2, ...\}$ is a basis for $V_h$. We distinguish FE basis functions
$\{\psi_1, \psi_2, ...\}$ from the linear Lagrange basis functions $\{ \phi_i, i = 1, 2, ...\}$
(with $\phi_i$ being associated with $\V{x}_i$) and emphasize that they can be different
(even when $r = 1$).
The FE solution of BVP (\ref{bvp-1}) and (\ref{bc-1}) is to find
$u_h \in V_h$ such that
\beq
\int_{\Omega} \left ( a \nabla u_h \cdot \nabla \psi  + (\V{b}\cdot \nabla u_h)\psi  + c u_h \psi\right ) d \V{x}
 = \int_{\Omega} f \psi d \V{x}, \quad \forall \psi \in V_h.
\label{fem-1}
\eeq

The following proposition is a standard error estimate that can be found in most FEM textbooks (e.g., see \cite{BS94}).
\begin{pro}
\label{pro:error-1}
Assume that $u \in H^2(\Omega)$ and the mesh $\mathcal{T}_h$
is regular. Then,
\beq
\| \nabla (u_h - u)\|_{L^2(\Omega)} \le C h \|\nabla^2 u \|_{L^2(\Omega)} ,
\label{error-1-1}
\eeq
where $h = \max_{K \in \mathcal{T}_h} h_K$ and $C$ is a constant independent of $u$, $u_h$, and the mesh.
\end{pro}

The error estimate (\ref{error-1-1}) shows the stable dependence of the FE solution
on the mesh. It shows that the FE solution remains
in a neighborhood of the exact solution for all regular meshes with
maximum element diameter $h$ no matter how different they are.
However, the estimate does not tell if and how the FE solution changes continuously with the mesh.

% section 3
\section{Mesh deformation}
\label{SEC:mesh-deform}

For the mesh sensitivity analysis of the FE solution we use an approach similar to gradient methods
for optimal control (see, e.g. \cite{BH1975}) and sensitivity analysis in shape optimization (e.g., see \cite{HM2003,SZ92}).
In this approach, a small deformation in the mesh is introduced and then an FE formulation is derived and bounds are
established for the change in the FE solution resulting from the mesh deformation.
In this section, we focus on the mesh deformation and changes in mesh qualities and functions
resulting from the mesh deformation.
We will discuss the mesh sensitivity of the FE solution in the next section.

We assume that a smooth vector field $\dot{X}=\dot{X}(\V{x})$ is given on $\Omega$ and satisfies
\[
\| \dot{X} \|_{L^\infty(\Omega)} < \infty, \quad \| \nabla \dot{X} \|_{L^\infty(\Omega)} < \infty .
\]
We consider the deformation of the mesh $\mathcal{T}_h$ by keeping its connectivity and moving
its interior vertices according to
\beq
\V{x}_i(t) = \V{x}_i(0) + t  \dot{\V{x}}_i, \quad 0 \le t < \delta, \quad i = 1, 2, ...
\label{xt-1}
\eeq
where $\delta$ is a small positive number and the nodal velocities are defined as
$\dot{\V{x}}_i =  \dot{X}(\V{x}_i(0))$ (that are considered constant in time). 
We denote the time-dependent mesh by $\mathcal{T}_h(t)$.
Here, we fix the boundary vertices for notational simplicity. 
The analysis applies without major modifications if the boundary points are allowed to move.
We also note that the linearization of any smooth mesh deformation can be cast in the form of (\ref{xt-1}).
Thus, (\ref{xt-1}) is sufficiently general.

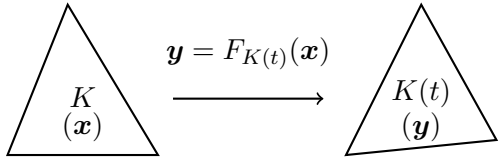
\begin{figure}[thb]
\centering
\begin{tikzpicture}[scale = 1]
\draw [thick] (0,0) -- (2,0) -- (0.8, 2) -- cycle;
\draw [thick,->] (2.2,0.75) -- (4.2,0.75);
\draw [above] (3.2,1) node {$\V{y} = F_{K(t)}(\V{x})$};
\draw [thick] (4.5,0) -- (6.5,0.2) -- (5.5, 2) -- cycle;
\draw [above] (1,0.5) node {$K$};
\draw [below] (1,0.7) node {$(\V{x})$};
\draw [above] (5.5,0.5) node {$K(t)$};
\draw [below] (5.5,0.7) node {$(\V{y})$};
\end{tikzpicture}
\caption{Affine mapping $F_{K(t)}$ from $K(0)=K$ to $K(t)$.}
\label{fig:mapping-1}
\end{figure}

To understand effects of the mesh deformation on mesh quantities and functions, we consider
the affine mapping $F_{K(t)}$ from $K(0) = K \in \mathcal{T}_h $ to $K(t) \in \mathcal{T}_h(t)$ (cf. Fig.~\ref{fig:mapping-1}).
To avoid notational confusion, we use coordinates $\V{x}$ and $\V{y}$ for $K(0)$ and $K(t)$, respectively.
Moreover, we express quantities and functions in $\V{y}$ by adding ``\,$\tilde{\mbox{ }}$\," on the top of the names.
For example, $\nabla$ denotes the gradient operator with respect to $\V{x}$ while the gradient operator
with respect to $\V{y}$ is written
as $\nablat$. We consider mesh deformation effects by first transforming functions/mesh quantities from $\V{y}$ to $\V{x}$ and
then differentiating them with respect to $t$ while keeping $\V{x}$ fixed. The time differentiation is similar to
material differentiation in fluid dynamics. We denote corresponding derivatives by the symbol ``\,$\dot{\mbox{ }}$\,".

We first consider time derivatives for the Jacobian matrix and determinant.
Denote the Jacobian matrix of $F_{K(t)}$ by $\mathbb{J} = \frac{\partial F_{K(t)}}{\partial \V{x}}$
and the Jacobian determinant by $J = \det(\mathbb{J})$. We can express $\mathbb{J}$ in terms of the edge matrices
of $K(0)$ and $K(t)$. Since $\V{y} = F_{K(t)}(\V{x})$ is affine, it can be expressed as
\beq
\V{y} = F_{K(t)}(\V{x}) = \V{x}_0^K(t) + \mathbb{J} \; (\V{x}-\V{x}_0^K(0)), \quad \forall \V{x} \in K(0).
\label{FK-1}
\eeq
By taking $\V{x} = \V{x}_i^K(0)$ and $\V{y} = \V{x}_i^K(t)$ ($i = 1, ..., d$) sequentially, we get
\[
[\V{x}_1^K(t) - \V{x}_0^K(t), ..., \V{x}_d^K(t) - \V{x}_0^K(t)]
= \mathbb{J}\; [\V{x}_1^K(0) - \V{x}_0^K(0), ..., \V{x}_d^K(0) - \V{x}_0^K(0)],
\]
which gives
\beq
\mathbb{J} = E_{K(t)} E_{K(0)}^{-1}.
\label{J-1}
\eeq
From this, it is evident that $\mathbb{J} |_{t = 0} = I$ (the $d\times d$ identity matrix).
Differentiating (\ref{FK-1}) with respect to $t$ gives
\beq
\dot{F}_{K(t)} = \dot{\V{x}}_0^K +  \dot{\mathbb{J}}\, (\V{x}-\V{x}_0^K(0))
= \dot{\V{x}}_0^K + \dot{E}_{K(t)} E_{K(0)}^{-1} (\V{x}-\V{x}_0^K(0)).
\label{FK-3}
\eeq
where we have used
\beq
\dot{\mathbb{J}} = \dot{E}_{K(t)} E_{K(0)}^{-1},
\quad \dot{E}_{K(t)} = [\dot{\V{x}}_1^K - \dot{\V{x}}_0^K, ..., \dot{\V{x}}_d^K - \dot{\V{x}}_0^K].
\label{Jdot-1}
\eeq

On the other hand, $\V{y} = F_{K(t)}(\V{x})$ can be expressed in terms of linear basis functions as
\beq
\label{FK-0}
F_{K(t)} = \sum_{i=0}^d \V{x}_i^K(t) \phi_i^K(\V{x}).
\eeq
Differentiating this with respect to $t$ yields
\beq
\dot{F}_{K(t)} = \sum_{i=0}^d \dot{\V{x}}_i^K \phi_i^K(\V{x}).
\label{FKdot-1}
\eeq

\begin{lem}
\label{lem:FKdot-0}
There hold
\begin{align}
& \dot{F}_{K(t)} = \dot{X}_h |_K,
\label{FKdot-2}
\\
& \nabla \cdot \dot{F}_{K(t)} = \text{tr} (\dot{\mathbb{J}}) = \text{tr} (\dot{E}_{K(t)} E_{K(0)}^{-1}) = \nabla \cdot \dot{X}_h |_K,
\label{FKdot-3}
\end{align}
where $\dot{X}_h$ is a piecewise linear velocity field defined as
\beq
\label{Xt-0}
\dot{X}_h = \sum_i \dot{\V{x}}_i \phi_i(\V{x}) .
\eeq
\end{lem}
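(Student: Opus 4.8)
The plan is to establish \eqref{FKdot-2} first and then read off \eqref{FKdot-3} from it together with the affine representation \eqref{FK-3} and the formula \eqref{Jdot-1} already derived. The whole argument is a bookkeeping exercise contrasting the local picture on a fixed element $K$ with the global, merely piecewise linear field $\dot{X}_h$.

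For \eqref{FKdot-2}, I start from \eqref{FKdot-1}, namely $\dot{F}_{K(t)} = \sum_{i=0}^d \dot{\V{x}}_i^K \phi_i^K(\V{x})$ on $K$. The key observation is that the local linear Lagrange basis function $\phi_i^K$ attached to the vertex $\V{x}_i^K$ is exactly the restriction to $K$ of the global linear Lagrange basis function associated with that same mesh vertex, while every global Lagrange basis function attached to a vertex not belonging to $K$ vanishes identically on $K$. Hence, in $\dot{X}_h|_K$ only the $d+1$ terms whose vertices are vertices of $K$ survive; since moreover the nodal velocity $\dot{\V{x}}_i^K$ attached to a vertex of $K$ coincides with the global nodal velocity at that mesh vertex (both equal $\dot{X}$ evaluated there, with the convention $\dot{\V{x}}_i = 0$ at the fixed boundary vertices), the two sums are termwise identical and \eqref{FKdot-2} follows. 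Equivalently, and perhaps more cleanly, one may note that by \eqref{FKdot-1} and $\sum_{i=0}^d \phi_i^K = 1$ the function $\dot{F}_{K(t)}$ is affine on $K$, that $\dot{X}_h|_K$ is affine on $K$, and that the two affine functions take the same value $\dot{\V{x}}_i^K$ at each of the $d+1$ vertices of $K$; therefore they coincide. It should be stressed that $\dot{X}_h$ is only continuous and piecewise linear, so its gradient jumps across interelement facets, which is why \eqref{FKdot-2} and \eqref{FKdot-3} are elementwise statements.

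For \eqref{FKdot-3}, I differentiate the affine representation \eqref{FK-3} (equivalently \eqref{FK-1}) with respect to $\V{x}$: since $\dot{F}_{K(t)} = \dot{\V{x}}_0^K + \dot{\mathbb{J}}\,(\V{x} - \V{x}_0^K(0))$ with $\dot{\mathbb{J}}$ constant on $K$, the Jacobian of $\dot{F}_{K(t)}$ with respect to $\V{x}$ is precisely $\dot{\mathbb{J}}$, so that $\nabla \cdot \dot{F}_{K(t)} = \text{tr}(\dot{\mathbb{J}})$. The identity $\dot{\mathbb{J}} = \dot{E}_{K(t)} E_{K(0)}^{-1}$ from \eqref{Jdot-1} then gives $\text{tr}(\dot{\mathbb{J}}) = \text{tr}(\dot{E}_{K(t)} E_{K(0)}^{-1})$, and taking the divergence of the already-proved \eqref{FKdot-2} yields the last equality $\nabla \cdot \dot{F}_{K(t)} = \nabla \cdot \dot{X}_h|_K$. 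I do not anticipate a genuine obstacle; the only points requiring care are the identification of local versus global Lagrange basis functions and nodal velocities on a fixed element, and the reminder that everything here is to be understood element by element since $\dot{X}_h \notin C^1(\Omega)$ in general.
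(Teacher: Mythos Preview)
Your proof is correct and follows essentially the same route as the paper: \eqref{FKdot-2} is obtained by matching the local expansion \eqref{FKdot-1} with the definition \eqref{Xt-0} of $\dot{X}_h$ (you just spell out the local-versus-global basis function identification more carefully), and \eqref{FKdot-3} is obtained by taking the divergence of the affine expression \eqref{FK-3} and of \eqref{FKdot-2}. The only cosmetic difference is that the paper applies $\nabla\cdot$ directly to \eqref{FKdot-1} for the last equality, whereas you apply it to the already-established \eqref{FKdot-2}; since the two are equal this is the same step.
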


\begin{proof}
The equality (\ref{FKdot-2}) follows from (\ref{FKdot-1}) and (\ref{Xt-0}).

Applying the divergence operator to (\ref{FK-3}) and by direct calculation, we get
\[
\nabla \cdot \dot{F}_{K(t)} = \text{tr} (\dot{\mathbb{J}}) = \text{tr} (\dot{E}_{K(t)} E_{K(0)}^{-1}) .
\]
Applying the divergence operator to (\ref{FKdot-1}), we get
\[
\nabla \cdot \dot{F}_{K(t)} = \nabla \cdot \dot{X}_h |_K .
\]
Combining the above results we obtain (\ref{FKdot-3}).
\end{proof}

\begin{lem}
\label{lem:Jdot-1}
There hold
\beq
\dot{\mathbb{J}}|_{t = 0}  = \dot{E}_{K(0)} E_{K(0)}^{-1},\quad
\dot{(\mathbb{J}^{-1})}|_{t = 0} = - \dot{E}_{K(0)} E_{K(0)}^{-1} ,\quad
\dot{J}|_{t=0} = \nabla \cdot \dot{X}_h|_K .
\label{der-1}
\eeq
\end{lem}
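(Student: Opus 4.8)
The plan is to obtain all three identities by differentiating the algebraic relations already derived for $\mathbb{J}$ and then specializing to $t=0$, where the crucial simplification $\mathbb{J}|_{t=0}=I$ (noted right after (\ref{J-1})) makes every conjugation factor collapse. I would begin with $\dot{\mathbb{J}}|_{t=0}$: starting from $\mathbb{J}=E_{K(t)}E_{K(0)}^{-1}$ in (\ref{J-1}) and using that $E_{K(0)}^{-1}$ is independent of $t$, differentiation in $t$ gives $\dot{\mathbb{J}}=\dot{E}_{K(t)}E_{K(0)}^{-1}$ (this is already recorded in (\ref{Jdot-1})); evaluating at $t=0$ yields $\dot{\mathbb{J}}|_{t=0}=\dot{E}_{K(0)}E_{K(0)}^{-1}$.

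Next, for the inverse, I would differentiate the identity $\mathbb{J}\,\mathbb{J}^{-1}=I$, obtaining $\dot{\mathbb{J}}\,\mathbb{J}^{-1}+\mathbb{J}\,\dot{(\mathbb{J}^{-1})}=0$, hence $\dot{(\mathbb{J}^{-1})}=-\mathbb{J}^{-1}\dot{\mathbb{J}}\,\mathbb{J}^{-1}$. Setting $t=0$ and using $\mathbb{J}|_{t=0}=I$ together with the first identity gives $\dot{(\mathbb{J}^{-1})}|_{t=0}=-\dot{E}_{K(0)}E_{K(0)}^{-1}$. For the determinant I would invoke Jacobi's formula $\tfrac{d}{dt}\det(\mathbb{J})=\det(\mathbb{J})\,\text{tr}(\mathbb{J}^{-1}\dot{\mathbb{J}})$; at $t=0$ we have $\det(\mathbb{J})|_{t=0}=1$ and $\mathbb{J}^{-1}|_{t=0}=I$, so $\dot{J}|_{t=0}=\text{tr}(\dot{\mathbb{J}}|_{t=0})=\text{tr}(\dot{E}_{K(0)}E_{K(0)}^{-1})$, which by (\ref{FKdot-3}) in Lemma~\ref{lem:FKdot-0} equals $\nabla\cdot\dot{X}_h|_K$.

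I do not anticipate a genuine obstacle here: the content is exactly the three standard matrix-calculus facts (differentiating a product with a constant factor, differentiating an inverse, and Jacobi's formula), and the only point one must not overlook is the evaluation $\mathbb{J}|_{t=0}=I$, which is what removes the conjugations and reduces everything to traces. If one prefers to avoid quoting Jacobi's formula, the last identity can alternatively be derived by differentiating the explicit multilinear expansion of $\det(E_{K(t)}E_{K(0)}^{-1})$ at $t=0$, but that route is more cumbersome and the trace identity is cleaner.
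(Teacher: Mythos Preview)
Your proposal is correct and follows essentially the same route as the paper: the paper also obtains the first identity from (\ref{Jdot-1}), the second by differentiating $\mathbb{J}\,\mathbb{J}^{-1}=I$ and using $\mathbb{J}|_{t=0}=I$, and the third via the determinant-derivative (Jacobi) formula combined with (\ref{FKdot-3}). There is nothing to add.
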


\begin{proof}
The first equation in (\ref{der-1}) follows from (\ref{Jdot-1}).

By differentiating the identity $\mathbb{J} \mathbb{J}^{-1} = I$, we get
\[
\dot{(\mathbb{J}^{-1})} = - \mathbb{J}^{-1} \dot{\mathbb{J}} \mathbb{J}^{-1} .
\]
Taking $t = 0$ and using the first equation in (\ref{der-1}) and the fact that $\mathbb{J}^{-1} |_{t = 0} = I$,
we obtain the second equation in (\ref{der-1}).

Recalling from $J = \det(\mathbb{J})$ and using the derivative formula for matrix determinants,  we get
\[
\dot{J} = \det(\mathbb{J}) \text{tr} (\mathbb{J}^{-1} \dot{\mathbb{J}}).
\]
Taking $t=0$ and using (\ref{FKdot-3}) we obtain the third equation in (\ref{der-1}).
\end{proof}

\begin{lem}
\label{lem:Xh-1}
There hold
\begin{align}
& \| \dot{X}_h \|_{L^\infty(\Omega)} \le \| \dot{X} \|_{L^\infty(\Omega)} ,
\label{Xdot-1}
\\
& \| \nabla \cdot \dot{X}_h\|_{L^\infty(\Omega)} \le d \| \nabla \dot{X} \|_{L^\infty(\Omega)} \max_K \frac{h_K}{a_K} .
\label{Xdot-2}
\end{align}
\end{lem}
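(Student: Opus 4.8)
The plan is to prove both estimates elementwise, exploiting that on each simplex $K$ the linear Lagrange basis functions $\phi_0^K, \dots, \phi_d^K$ are the barycentric coordinates, so they are nonnegative on $K$ and form a partition of unity, $\sum_{i=0}^d \phi_i^K \equiv 1$ on $K$. On $K$ we have $\dot{X}_h|_K = \sum_{i=0}^d \dot{\V{x}}_i^K\, \phi_i^K$ with $\dot{\V{x}}_i^K = \dot{X}(\V{x}_i^K)$, since only the global basis functions attached to vertices of $K$ are supported there. This partition-of-unity structure is what makes both bounds essentially immediate.

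For (\ref{Xdot-1}), I would fix $K$ and $\V{x} \in K$ and note that, because $\phi_i^K(\V{x}) \ge 0$ and $\sum_{i=0}^d \phi_i^K(\V{x}) = 1$, the vector $\dot{X}_h(\V{x}) = \sum_{i=0}^d \phi_i^K(\V{x})\, \dot{X}(\V{x}_i^K)$ is a convex combination of the nodal values $\dot{X}(\V{x}_i^K)$. Hence
\[
|\dot{X}_h(\V{x})| \le \sum_{i=0}^d \phi_i^K(\V{x})\, |\dot{X}(\V{x}_i^K)| \le \max_{0 \le i \le d} |\dot{X}(\V{x}_i^K)| \le \| \dot{X} \|_{L^\infty(\Omega)} ,
\]
and taking the supremum over $\V{x}$ and over $K$ yields (\ref{Xdot-1}).

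For (\ref{Xdot-2}), the first step is to differentiate $\sum_{i=0}^d \phi_i^K \equiv 1$ to get $\sum_{i=0}^d \nabla \phi_i^K = 0$, so $\nabla \phi_0^K = -\sum_{i=1}^d \nabla \phi_i^K$, and therefore on $K$
\[
\nabla \cdot \dot{X}_h|_K = \sum_{i=0}^d \dot{\V{x}}_i^K \cdot \nabla \phi_i^K = \sum_{i=1}^d \big( \dot{\V{x}}_i^K - \dot{\V{x}}_0^K \big) \cdot \nabla \phi_i^K ,
\]
which is the representation already recorded in Lemma~\ref{lem:FKdot-0} via $\text{tr}(\dot{E}_{K(t)} E_{K(0)}^{-1})$ together with (\ref{E-basis-1}). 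Next I would bound the two factors in each term. Since $K$ is convex, the segment joining $\V{x}_0^K$ and $\V{x}_i^K$ lies in $\Omega$, so integrating $\nabla \dot{X}$ along it and using $\dot{X} \in W^{1,\infty}(\Omega)$ gives $|\dot{\V{x}}_i^K - \dot{\V{x}}_0^K| = |\dot{X}(\V{x}_i^K) - \dot{X}(\V{x}_0^K)| \le \| \nabla \dot{X} \|_{L^\infty(\Omega)}\, |\V{x}_i^K - \V{x}_0^K| \le \| \nabla \dot{X} \|_{L^\infty(\Omega)}\, h_K$, the last step because $|\V{x}_i^K - \V{x}_0^K|$ is an edge length of $K$ and $h_K$ is the longest edge. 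On the other hand, (\ref{aK-1}) gives $|\nabla \phi_i^K| \le 1/a_K$. Summing the $d$ terms,
\[
|\nabla \cdot \dot{X}_h|_K| \le \sum_{i=1}^d |\dot{\V{x}}_i^K - \dot{\V{x}}_0^K|\, |\nabla \phi_i^K| \le d\, \| \nabla \dot{X} \|_{L^\infty(\Omega)}\, \frac{h_K}{a_K} \le d\, \| \nabla \dot{X} \|_{L^\infty(\Omega)}\, \max_K \frac{h_K}{a_K} ,
\]
and taking the maximum over $K$ gives (\ref{Xdot-2}).

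There is essentially no serious obstacle here: each inequality reduces to an elementary estimate on a single simplex. The one point I would take care with is the Lipschitz step for $\dot{X}$, where one should invoke convexity of the individual simplices so that the connecting segments stay inside $\Omega$, rather than global Lipschitz continuity of $\dot{X}$ on the possibly non-convex domain $\Omega$. It is also worth remarking why the regularity ratio $h_K/a_K$ — the same quantity bounded in (\ref{mesh-reg-1}) — shows up: it is exactly the product of the edge-length bound $|\V{x}_i^K - \V{x}_0^K| \le h_K$ and the barycentric-gradient bound $|\nabla \phi_i^K| \le 1/a_K$, so degenerate (small $a_K$) elements are precisely where $\nabla \cdot \dot{X}_h$ can be large even for a smooth velocity field $\dot{X}$, which is why no mesh regularity is assumed but the bound must nonetheless carry this factor.
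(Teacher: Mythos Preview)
Your proof is correct and follows essentially the same approach as the paper: both use the convex-combination structure of the barycentric coordinates for (\ref{Xdot-1}) (the paper simply says this is ``evident from the definition''), and for (\ref{Xdot-2}) both rewrite $\nabla\cdot\dot{X}_h|_K = \sum_{i=1}^d(\dot{\V{x}}_i^K-\dot{\V{x}}_0^K)\cdot\nabla\phi_i^K$, bound $|\dot{\V{x}}_i^K-\dot{\V{x}}_0^K|\le \|\nabla\dot{X}\|_{L^\infty(\Omega)}h_K$ via the line-integral representation, and invoke (\ref{aK-1}). Your remarks on convexity of the simplex and the appearance of $h_K/a_K$ are apt but not present in the paper's more terse argument.
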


\begin{proof}
The inequality (\ref{Xdot-1}) is evident from the definition of $\dot{X}_h$ in (\ref{Xt-0}).

On $K$, we can write $\nabla \cdot \dot{X}_h$ as
\beq
\nabla \cdot \dot{X}_h = \sum_{i=0}^d \dot{\V{x}}_i^K \cdot \nabla \phi_i^K(\V{x})
= \sum_{i=1}^d (\dot{\V{x}}_i^K-\dot{\V{x}}_0^K) \cdot \nabla \phi_i^K(\V{x}) .
\label{Xh-3}
\eeq
From
\[
\dot{\V{x}}_i^K-\dot{\V{x}}_0^K = \dot{X}(\V{x}_i^K)-\dot{X}(\V{x}_0^K)
= \int_0^1 \nabla \dot{X}(\V{x}_0^K + t (\V{x}_i^K-\V{x}_0^K)) \cdot (\V{x}_i^K-\V{x}_0^K) d t,
\]
we have
\beq
| \dot{\V{x}}_i^K-\dot{\V{x}}_0^K | \le \| \nabla \dot{X} \|_{L^\infty(\Omega)} h_K .
\label{Xdot-3}
\eeq
The inequality (\ref{Xdot-2}) follows from (\ref{aK-1}), (\ref{Xh-3}), and the above inequality.
\end{proof}

\begin{rem}
\label{rem:Xdot-1}
In (\ref{Xdot-2}) we have assumed that the mesh velocity field is smooth. If the mesh velocity field is not smooth,
from (\ref{Xh-3}) we have
\beq
\| \nabla \cdot \dot{X}_h\|_{L^\infty(\Omega)} \le \frac{d+1}{\min\limits_K a_K} \| \dot{X} \|_{L^\infty(\Omega)} .
\label{Xdot-4}
\eeq
\end{rem}

\begin{lem}
\label{lem:EK-1}
There hold
\begin{align}
& \|E_{K}^{-1}\|_2 \le \frac{\sqrt{d}}{a_K},
\label{EK-3}
\\
& \| \dot{E}_{K}\|_2 \le \sqrt{d} \, h_K \| \nabla \dot{X} \|_{L^\infty(\Omega)} ,
\label{EK-4}
\\
& \| \dot{E}_{K}\|_2 \|E_{K}^{-1}\|_2 \le \frac{d\, h_K}{a_K} \| \nabla \dot{X} \|_{L^\infty(\Omega)} ,
\label{EK-2}
\end{align}
where $\| \cdot \|_2$ denotes the 2-norm for matrices.
\end{lem}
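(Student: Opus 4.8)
The plan is to bound the spectral norm by the Frobenius norm in each of the three estimates and then exploit the explicit column structure of the matrices involved, which has already been laid out in the excerpt.

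For \eqref{EK-3}, I would start from \eqref{E-basis-1}, which identifies the columns of $E_K^{-T}$ with the gradients $\nabla \phi_i^K$, $i = 1,\dots,d$. Since the spectral norm is invariant under transposition, $\|E_K^{-1}\|_2 = \|E_K^{-T}\|_2 \le \|E_K^{-T}\|_F$, so it suffices to bound $\|E_K^{-T}\|_F^2 = \sum_{i=1}^d |\nabla \phi_i^K|^2$. By \eqref{aK-1} one has $a_K \le 1/|\nabla \phi_i^K|$, hence $|\nabla \phi_i^K| \le 1/a_K$ for every $i$, which yields $\|E_K^{-T}\|_F \le \sqrt{d}/a_K$ and proves \eqref{EK-3}.

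For \eqref{EK-4}, recall from \eqref{Jdot-1} that the columns of $\dot{E}_K$ are exactly the differences $\dot{\V{x}}_i^K - \dot{\V{x}}_0^K$, $i = 1,\dots,d$. These were already estimated in \eqref{Xdot-3} by $|\dot{\V{x}}_i^K - \dot{\V{x}}_0^K| \le \|\nabla \dot{X}\|_{L^\infty(\Omega)}\, h_K$. Therefore $\|\dot{E}_K\|_2 \le \|\dot{E}_K\|_F = \big(\sum_{i=1}^d |\dot{\V{x}}_i^K - \dot{\V{x}}_0^K|^2\big)^{1/2} \le \sqrt{d}\, h_K \|\nabla \dot{X}\|_{L^\infty(\Omega)}$. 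The last estimate \eqref{EK-2} then follows at once by multiplying \eqref{EK-3} and \eqref{EK-4}.

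There is essentially no hard step here: the argument is elementary once one invokes \eqref{E-basis-1}, \eqref{aK-1}, and \eqref{Xdot-3}. The only places warranting a little care are the passage between $E_K^{-1}$ and $E_K^{-T}$ (legitimate because $\|A\|_2 = \|A^T\|_2$) and reading off $\max_i |\nabla \phi_i^K| = 1/a_K$ correctly from \eqref{aK-1}. Using the Frobenius norm rather than a column-sum or other submultiplicative norm is what produces the clean constants $\sqrt{d}$ and $d$ in the statement.
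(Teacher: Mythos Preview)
Your proposal is correct and matches the paper's proof essentially line for line: bound the spectral norm by the Frobenius norm, read off the columns of $E_K^{-T}$ and $\dot{E}_K$ via \eqref{E-basis-1} and \eqref{Jdot-1}, and apply \eqref{aK-1} and \eqref{Xdot-3} respectively; \eqref{EK-2} is then the product of the first two bounds. The only cosmetic difference is that the paper writes $\|E_K^{-1}\|_F$ directly (the Frobenius norm being transposition-invariant as well), whereas you route through $\|E_K^{-T}\|_F$ via $\|A\|_2 = \|A^T\|_2$; the content is identical.
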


\begin{proof}
From (\ref{E-basis-1}), we have
\[
\|E_{K(0)}^{-1}\|_2^2 \le \|E_{K(0)}^{-1}\|_F^2 = \sum_{i=1}^d |\nabla \phi_i^K|^2
\le \frac{d}{a_K^2} ,
\]
which gives (\ref{EK-3}). Here, $\| \cdot \|_F$ is the Frobenius norm. Moreover, from (\ref{Xdot-3}) we have
\[
\| \dot{E}_{K(0)}\|_2^2 \le \| \dot{E}_{K(0)}\|_F^2 = \sum_{i=1}^d |\dot{\V{x}}_i^K-\dot{\V{x}}_0^K|^2
\le d \| \nabla \dot{X} \|_{L^\infty(\Omega)}^2 h_K^2 ,
\]
which gives (\ref{EK-4}) (with $K(0)$ being replaced by $K$).
The inequality (\ref{EK-2}) follows from (\ref{EK-3}) and (\ref{EK-4}).
\end{proof}

\begin{rem}
\label{rem:EK-1}
As in Remark~\ref{rem:Xdot-1}, if the mesh velocity field is not smooth,  (\ref{EK-4}) and (\ref{EK-2}) can be replaced by
\begin{align}
& \| \dot{E}_{K}\|_2 \le \sqrt{2 d} \, \| \dot{X} \|_{L^\infty(\Omega)} ,
\label{EK-5}
\\
& \| \dot{E}_{K}\|_2 \|E_{K}^{-1}\|_2 \le \frac{d\, \sqrt{2}}{a_K} \| \dot{X} \|_{L^\infty(\Omega)} .
\label{EK-6}
\end{align}
\end{rem}

In FE computation, a basis function $\tilde{\psi}$ on $K(t)$ is typically defined as the composite
function of a basis function $\psi$ on $K(0)$ with the affine mapping $F_{K(t)}$, i.e.,
\beq
\tilde{\psi}(\V{y}, t) = \psi(F_{K(t)}^{-1}(\V{y})),\quad
\tilde{\psi}(F_{K(t)}(\V{x}), t) = \psi(\V{x}) .
\label{phi-1}
\eeq
Taking gradient of both sides of the second equation with respect to $\V{x}$, we get
\beq
\nablat \tilde{\psi} (F_{K(t)}(\V{x}), t) =  \mathbb{J}^{-T} \nabla \psi (\V{x}) .
\label{phi-2}
\eeq

\begin{lem}
\label{lem:phi-1}
Assume that basis functions $\tilde{\psi}$ on $K(t)$ and $\psi$ on $K(0)$ are related through (\ref{phi-1}). Then,
\begin{align}
& \dot{\tilde{\psi}}|_{t = 0} = 0,
\quad \dot{} \quad
\label{phi-3-0}
\\
& \dot{(\nablat  \tilde{\psi})}\; \; |_{t=0} 
 = - E_{K(0)}^{-T} \dot{E}_{K(0)}^T \nabla \psi .
\label{phi-3}
\end{align}
\end{lem}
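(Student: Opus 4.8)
The plan is to exploit the fact that the ``\,$\dot{\mbox{ }}$\," notation is defined precisely as differentiation in $t$ after transporting a quantity back to $K(0)$ via the affine map $F_{K(t)}$, so that both identities follow by differentiating the already-pulled-back relations (\ref{phi-1}) and (\ref{phi-2}) with $\V{x}$ held fixed, and then invoking Lemma~\ref{lem:Jdot-1}.

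First I would establish (\ref{phi-3-0}). Introduce $g(\V{x}, t) := \tilde{\psi}(F_{K(t)}(\V{x}), t)$. The second equation in (\ref{phi-1}) says $g(\V{x}, t) = \psi(\V{x})$ for all $t \in [0,\delta)$, with the right-hand side independent of $t$. Since $\dot{\tilde{\psi}}$ is by definition $\partial_t g$, we get $\dot{\tilde{\psi}} \equiv 0$, and in particular $\dot{\tilde{\psi}}|_{t=0} = 0$. (One may remark that this holds for every $t$, not just $t = 0$, but only $t = 0$ is needed later.)

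Next I would turn to (\ref{phi-3}). Equation (\ref{phi-2}) reads $\nablat \tilde{\psi}(F_{K(t)}(\V{x}), t) = \mathbb{J}^{-T} \nabla \psi(\V{x})$; the left-hand side is exactly the pulled-back (material) representation of $\nablat \tilde{\psi}$, so $\dot{(\nablat \tilde{\psi})}$ is obtained by differentiating the right-hand side in $t$ at fixed $\V{x}$. Because $\nabla \psi(\V{x})$ does not depend on $t$, this gives $\dot{(\nablat \tilde{\psi})} = \dot{(\mathbb{J}^{-T})}\, \nabla \psi = \big(\dot{(\mathbb{J}^{-1})}\big)^{T} \nabla \psi$. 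Evaluating at $t = 0$ and substituting the second identity of Lemma~\ref{lem:Jdot-1}, namely $\dot{(\mathbb{J}^{-1})}|_{t=0} = -\dot{E}_{K(0)} E_{K(0)}^{-1}$, yields $\dot{(\nablat \tilde{\psi})}|_{t=0} = \big(-\dot{E}_{K(0)} E_{K(0)}^{-1}\big)^{T} \nabla \psi = -E_{K(0)}^{-T} \dot{E}_{K(0)}^{T} \nabla \psi$, which is (\ref{phi-3}).

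There is no genuine obstacle here; the only subtlety — and the one point I would state explicitly — is the bookkeeping of the material derivative: one must recognize that (\ref{phi-1}) and (\ref{phi-2}), as written, already express the relevant quantities in the $\V{x}$-variable along the trajectory $\V{y} = F_{K(t)}(\V{x})$, so that applying $\partial_t|_{\V{x}}$ directly produces $\dot{\tilde{\psi}}$ and $\dot{(\nablat\tilde{\psi})}$ without any extra convective term. After that, the result is a one-line consequence of Lemma~\ref{lem:Jdot-1}.
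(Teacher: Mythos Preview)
Your proof is correct and follows essentially the same approach as the paper: differentiate the second equation of (\ref{phi-1}) in $t$ with $\V{x}$ fixed to get (\ref{phi-3-0}), and differentiate (\ref{phi-2}) in $t$ and invoke Lemma~\ref{lem:Jdot-1} to get (\ref{phi-3}). Your version simply spells out the material-derivative bookkeeping more explicitly than the paper does.
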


\begin{proof}
The equality (\ref{phi-3-0}) follows from differentiating the second equation in (\ref{phi-1}) with respect to time.
The equality (\ref{phi-3}) can be obtained by differentiating (\ref{phi-2}) with respect to time, taking $t = 0$, and
using Lemma~\ref{lem:Jdot-1}.
\end{proof}

\begin{lem}
\label{lem:fun-1}
Consider a function $f = f(\V{x})$ defined on $K$ and let $\tilde{f} = \tilde{f}(\V{y},t) = f(F_{K(t)}(\V{x}))$.
Then,
\beq
\dot{\tilde{f}}|_{t=0} = \nabla f \cdot \dot{X}_h |_K .
\label{fun-1}
\eeq
\end{lem}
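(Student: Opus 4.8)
The plan is to prove (\ref{fun-1}) by a direct chain-rule computation in $t$, after unravelling the dot-derivative convention. Recall that $\dot{(\cdot)}$ means: transport the quantity from the moving coordinates $\V{y}$ on $K(t)$ back to the fixed coordinates $\V{x}$ on $K(0)=K$, and only then differentiate in $t$ with $\V{x}$ held fixed. For $\tilde{f}$ this pullback is simply the map $t \mapsto f(F_{K(t)}(\V{x}))$ for fixed $\V{x}\in K$, so that
\[
\dot{\tilde{f}}|_{t=0} = \frac{d}{dt}\, f(F_{K(t)}(\V{x}))\Big|_{t=0}.
\]
The remaining task is to identify the path $t \mapsto F_{K(t)}(\V{x})$ explicitly and differentiate it.

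First I would record two facts about $F_{K(t)}$ on $K$. From (\ref{FK-0}), $F_{K(0)}(\V{x}) = \sum_{i=0}^d \V{x}_i^K(0)\,\phi_i^K(\V{x}) = \V{x}$ since $\sum_{i=0}^d \phi_i^K = 1$ (equivalently, $\mathbb{J}|_{t=0}=I$ as noted right after (\ref{J-1})); and from (\ref{FKdot-1}), $\dot{F}_{K(t)} = \sum_{i=0}^d \dot{\V{x}}_i^K\,\phi_i^K(\V{x})$, which is independent of $t$ because the nodal velocities $\dot{\V{x}}_i^K$ are constant in time. Since $F_{K(t)}$ is affine in $t$, these two facts give the exact identity $F_{K(t)}(\V{x}) = \V{x} + t\,\dot{F}_{K(t)}(\V{x})$ on $K$, which by (\ref{FKdot-2}) of Lemma~\ref{lem:FKdot-0} reads $F_{K(t)}(\V{x}) = \V{x} + t\,\dot{X}_h(\V{x})$ for $\V{x}\in K$. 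Differentiating $f(\V{x} + t\,\dot{X}_h(\V{x}))$ at $t=0$ by the chain rule then gives $\nabla f(\V{x})\cdot \dot{X}_h(\V{x})$, which is exactly (\ref{fun-1}) read pointwise on $K$.

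I do not expect a genuine analytic obstacle: the statement is essentially a one-line chain rule once the conventions are in place. The only points deserving care are the bookkeeping of the dot-derivative (pulling $\tilde{f}$ back to the reference coordinates $\V{x}$ before differentiating in $t$) and the regularity of $f$ needed to invoke the chain rule. In the intended applications $f$ is a coefficient of the PDE, lying in $W^{1,\infty}(\Omega)$ (or the source term in $W^{1,2}(\Omega)$); for the classically differentiable case the argument above is complete, and for the rougher cases one reads (\ref{fun-1}) in the almost-everywhere sense, obtaining it by passing to the limit in the difference quotient $(f(\V{x}+t\,\dot{X}_h(\V{x})) - f(\V{x}))/t$ along the straight path just exhibited, using the boundedness of $\dot{X}_h$.
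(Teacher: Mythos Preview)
Your proof is correct and follows essentially the same approach as the paper: both apply the chain rule to $t\mapsto f(F_{K(t)}(\V{x}))$ and then invoke Lemma~\ref{lem:FKdot-0} (together with $F_{K(0)}(\V{x})=\V{x}$) to identify the velocity as $\dot{X}_h|_K$. Your version is somewhat more explicit in writing out the exact identity $F_{K(t)}(\V{x}) = \V{x} + t\,\dot{X}_h(\V{x})$ and in addressing the regularity needed for the chain rule, but the underlying argument is the same.
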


\begin{proof}
Differentiating $\tilde{f} = \tilde{f}(\V{y},t) = f(F_{K(t)}(\V{x}))$ with respect to $t$, we get
\[
\dot{\tilde{f}} = \nabla f (F_{K(t)}(\V{x})) \cdot \dot{F}_{K(t)}(\V{x}).
\]
Taking $t = 0$ and using Lemma~\ref{lem:FKdot-0}, we obtain (\ref{fun-1}).
\end{proof}

\begin{lem}
\label{lem:FE-1}
Consider an FE approximation $v_h = \sum_{i} v_i \psi_i(\V{x})$ and its perturbation
$\tilde{v}_h(\V{y},t) = \sum_{i} v_i(t) \tilde{\psi}_i(\V{y},t)$. Then,
\begin{align}
& \dot{\tilde{v}}_h |_{K, t = 0} =  \dot{v}_h |_K, \quad  \forall K \in \mathcal{T}_h
\label{der-4}
\\
& \dot{(\nablat \tilde{v}_h)}\;\; |_{K, t=0} = - E_{K}^{-T} \dot{E}_{K}^T \nabla v_h |_K + \nabla \dot{v}_h|_K,
\quad \forall K \in \mathcal{T}_h
\label{der-5}
\end{align}
where $\dot{v}_h = \sum_i \dot{v}_i(0) \psi_i(\V{x})$.
\end{lem}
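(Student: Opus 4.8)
The plan is to obtain both identities from Lemma~\ref{lem:phi-1} by a term-by-term application of the product rule on a single element. First I would fix $K\in\mathcal{T}_h$ and observe that, on $K(t)$, the function $\tilde v_h$ equals the finite sum $\sum_i v_i(t)\,\tilde\psi_i(\V y,t)$ taken over the (finitely many) indices $i$ with $\mathrm{supp}\,\psi_i\cap K\neq\emptyset$, and that for each such $i$ the basis functions $\tilde\psi_i$ on $K(t)$ and $\psi_i$ on $K(0)$ are related through the common affine map $F_{K(t)}$ exactly as in (\ref{phi-1}). This is precisely what makes Lemma~\ref{lem:phi-1} applicable to each summand, and it is the only structural point one has to set up before the computation.

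For (\ref{der-4}) I would differentiate $\tilde v_h|_{K(t)}$ in $t$ along $\V y=F_{K(t)}(\V x)$, getting
\[
\dot{\tilde v}_h|_{K}=\sum_i\dot v_i(t)\,\tilde\psi_i+\sum_i v_i(t)\,\dot{\tilde\psi}_i ,
\]
and then set $t=0$: since $F_{K(0)}=\mathrm{id}$ we have $\tilde\psi_i(\cdot,0)=\psi_i$ and $v_i(0)=v_i$, while $\dot{\tilde\psi}_i|_{t=0}=0$ by (\ref{phi-3-0}). The second sum therefore drops out and the first one reduces to $\sum_i\dot v_i(0)\,\psi_i=\dot v_h$ on $K$, which is (\ref{der-4}).

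For (\ref{der-5}) I would instead start from $\nablat\tilde v_h|_{K(t)}=\sum_i v_i(t)\,\nablat\tilde\psi_i$ (the coefficients $v_i(t)$ do not depend on $\V y$) and differentiate in $t$ to get
\[
\dot{(\nablat\tilde v_h)}|_{K}=\sum_i\dot v_i(t)\,\nablat\tilde\psi_i+\sum_i v_i(t)\,\dot{(\nablat\tilde\psi_i)} .
\]
Evaluating at $t=0$ and using $\mathbb{J}^{-T}|_{t=0}=I$ in (\ref{phi-2}) gives $\nablat\tilde\psi_i|_{t=0}=\nabla\psi_i$, so the first sum is $\sum_i\dot v_i(0)\nabla\psi_i=\nabla\dot v_h|_K$; applying (\ref{phi-3}) to the second sum and pulling the element-constant matrix $-E_K^{-T}\dot E_K^T$ out of $\sum_i v_i\nabla\psi_i=\nabla v_h|_K$ produces the remaining term $-E_K^{-T}\dot E_K^T\nabla v_h|_K$, giving (\ref{der-5}).

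I do not expect a genuine obstacle: the argument is essentially the product rule together with $\mathbb{J}|_{t=0}=I$. The one thing to be careful about is the local/global bookkeeping — working element by element, keeping the $t$-dependence in the coefficients $v_i(t)$ but not in the reference basis functions $\psi_i$ on $K(0)$, and justifying the term-by-term differentiation of the finite local sum so that Lemma~\ref{lem:phi-1} can legitimately be invoked on each summand.
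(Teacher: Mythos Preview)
Your proposal is correct and amounts to the same computation as the paper's proof. The only organizational difference is that the paper first pulls back to $\V{x}$-coordinates via (\ref{phi-1}), writing $\tilde v_h(F_{K(t)}(\V{x}),t)=\sum_i v_i(t)\psi_i(\V{x})$ so that the basis functions carry no $t$-dependence, and then differentiates using Lemma~\ref{lem:Jdot-1} directly; you instead keep the $\tilde\psi_i$'s and invoke Lemma~\ref{lem:phi-1} term by term, which is equivalent since Lemma~\ref{lem:phi-1} was itself obtained by that same pullback.
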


\begin{proof}
Restricted on $K(t)$, using (\ref{phi-1}) we can rewrite $\tilde{v}_h$ into
\beq
\tilde{v}_h(F_{K(t)}(\V{x}),t) = \sum_{i} v_i(t) \psi_i(\V{x}) |_{K(0)}.
\label{vh-1}
\eeq
Differentiating this with respect to $t$, we get
\[
\dot{\tilde{v}}_h(F_{K(t)}(\V{x}),t) = \sum_{i} \dot{v}_i(t) \psi_i(\V{x})  |_{K(0)}.
\]
Taking $t= 0$, we get (\ref{der-4}).

Applying $\nabla \cdot$ to (\ref{vh-1}) and using the chain rule, we get
\[
\nablat \tilde{v}_h(F_{K(t)}(\V{x}),t) = \mathbb{J}^{-T} \sum_{i} v_i(t) \nabla \psi_i(\V{x}) |_{K(0)} .
\]
Differentiating this with respect to $t$, we obtain
\[
\dot{(\nablat \tilde{v}_h)} \; \; = (\dot{(\mathbb{J}^{-1})})^T \sum_{i} v_i(t) \nabla \psi_i(\V{x}) |_{K(0)}
+ \mathbb{J}^{-T} \sum_{i} \dot{v}_i(t) \nabla \psi_i(\V{x}) |_{K(0)} .
\]
Taking $t = 0$ and using Lemma~\ref{lem:Jdot-1}, we obtain (\ref{der-5}).
\end{proof}

% section 4
\section{Mesh sensitivity analysis for the finite element solution}
\label{SEC:sensitivity}

In this section we analyze the mesh sensitivity for the FE solution $u_h = \sum_{i} u_i \psi_i(\V{x})$
satisfying (\ref{fem-1}).
On the deformed mesh $\mathcal{T}_h(t)$, the perturbed FE solution can be expressed
as $\tilde{u}_h = \sum_{i} u_i(t) \tilde{\psi}_i(\V{y},t)$. We assume that $\tilde{u}_h$ is differential.
From Lemma~\ref{lem:FE-1}, the material derivative of
$\tilde{u}_h$ at $t=0$ is given by $\dot{\tilde{u}}_h |_{t=0} = \sum_{i} \dot{u}_i(0) \psi_i(\V{x})$. We denote
this by $\dot{u}_h$, i.e., $\dot{u}_h = \sum_{i} \dot{u}_i(0) \psi_i(\V{x})$.
This derivative measures the change in $u_h$ with mesh deformation.
In this section, we first derive the FE formulation for $\dot{u}_h$ and then establish a bound for $\|\nabla \dot{u}_h\|_{L^2(\Omega)}$.

\begin{thm}
\label{thm:fem-1}
The material derivative $\dot{u}_h$ satisfies
\begin{align}
& \int_{\Omega} \left ( {a} \nabla \dot{u}_h \cdot \nabla \psi
+ ({\V{b}} \cdot \nabla \dot{u}_h){\psi} + {c} \dot{u}_h {\psi}\right ) d \V{x}
\nonumber
\\
& =  \sum_{K} \int_{K} \left ( {a} \nabla u_h \cdot  (\dot{E}_{K}E_{K}^{-1}+ E_{K}^{-T} \dot{E}_{K}^{T}) \cdot \nabla \psi
+ ({\V{b}} \cdot E_{K}^{-T} \dot{E}_{K}^{T} \cdot \nabla u_h){\psi} \right ) d \V{x}
\nonumber
\\
& \quad \quad - \int_{\Omega} \left ( (\nabla a \cdot \dot{X}_h)  (\nabla {u}_h \cdot \nabla {\psi})
+ a ( \nabla {u}_h \cdot \nabla {\psi}) (\nabla \cdot \dot{X}_h) \right ) d \V{x}
\nonumber
\\
& \quad \quad - \int_{\Omega} \left ( 
 (\dot{X}_h \cdot \nabla \V{b} \cdot \nabla {u}_h) {\psi} 
+ ({\V{b}} \cdot \nabla {u}_h){\psi}  (\nabla \cdot \dot{X}_h) \right ) d \V{x}
\nonumber
\\
& \quad \quad + \int_{\Omega} \left (
c \psi (\nabla u_h \cdot \dot{X}_h) +  {c} {u}_h (\nabla \psi \cdot \dot{X}_h) \right ) d \V{x}
 - \int_{\Omega} f (\nabla \psi \cdot \dot{X}_h) d \V{x} ,
 \qquad \forall \psi \in V_h .
\label{fem-4}
\end{align}
\end{thm}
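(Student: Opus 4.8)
The plan is to start from the Galerkin equation (\ref{fem-1}) written on the deformed mesh $\mathcal{T}_h(t)$, pull it back elementwise to the reference mesh $\mathcal{T}_h$ by the affine maps $F_{K(t)}$, differentiate in $t$, and evaluate at $t=0$. Concretely, for each fixed $\psi\in V_h$ let $\tilde\psi=\tilde\psi(\cdot,t)\in V_h(t)$ be defined by (\ref{phi-1}) on each element; then, since $\tilde u_h$ solves the FE problem on $\mathcal{T}_h(t)$,
\[
\int_\Omega\big(a\,\nablat\tilde u_h\cdot\nablat\tilde\psi + (\V{b}\cdot\nablat\tilde u_h)\tilde\psi + c\,\tilde u_h\tilde\psi\big)\,d\V{y} = \int_\Omega f\,\tilde\psi\,d\V{y}
\]
for all $t\in[0,\delta)$. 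The left minus right side is identically zero in $t$, so its $t$-derivative at $t=0$ vanishes, and that derivative is exactly (\ref{fem-4}) after rearranging terms. (Differentiability of $t\mapsto u_i(t)$ is assumed, as stated in Section~\ref{SEC:sensitivity}; it also follows from the smooth, at $t=0$ invertible, dependence of the stiffness matrix on $t$.)

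To compute the derivative I write $\int_\Omega=\sum_K\int_{K(t)}$ and change variables $\V{y}=F_{K(t)}(\V{x})$ on each element, so that $d\V{y}=J\,d\V{x}$, while $\nablat\tilde\psi$ becomes $\mathbb{J}^{-T}\nabla\psi$ by (\ref{phi-2}), $\nablat\tilde u_h$ becomes $\mathbb{J}^{-T}\sum_i u_i(t)\nabla\psi_i$ (as in the proof of Lemma~\ref{lem:FE-1}), and the coefficients become $\tilde a(\V{x},t)=a(F_{K(t)}(\V{x}))$, etc. Differentiating the resulting finite sum of element integrals by the product rule and setting $t=0$, I use $\mathbb{J}|_{t=0}=I$, $J|_{t=0}=1$, $\dot J|_{t=0}=\nabla\cdot\dot{X}_h|_K$ and $\dot{(\mathbb{J}^{-1})}|_{t=0}=-\dot E_K E_K^{-1}$ from Lemma~\ref{lem:Jdot-1}; $\dot{\tilde a}|_{t=0}=\nabla a\cdot\dot{X}_h|_K$ and its analogues for $\V{b},c,f$ from Lemma~\ref{lem:fun-1}; $\dot{\tilde\psi}|_{t=0}=0$ and $\dot{(\nablat\tilde\psi)}|_{t=0}=-E_K^{-T}\dot E_K^T\nabla\psi$ from Lemma~\ref{lem:phi-1}; and $\dot{\tilde u}_h|_{t=0}=\dot u_h$, $\dot{(\nablat\tilde u_h)}|_{t=0}=-E_K^{-T}\dot E_K^T\nabla u_h+\nabla\dot u_h$ from Lemma~\ref{lem:FE-1}. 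Collecting the terms that contain $\nabla\dot u_h$ or $\dot u_h$ reproduces $\int_\Omega(a\nabla\dot u_h\cdot\nabla\psi+(\V{b}\cdot\nabla\dot u_h)\psi+c\dot u_h\psi)\,d\V{x}$ on the left of (\ref{fem-4}), and the remaining terms form the right-hand side.

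Two points need care. First, in the principal term $\mathbb{J}^{-T}$ multiplies both $\nabla u_h$ and $\nabla\psi$, so the derivative of $\mathbb{J}^{-1}$ produces $-(E_K^{-T}\dot E_K^T\nabla u_h)\cdot\nabla\psi-\nabla u_h\cdot(E_K^{-T}\dot E_K^T\nabla\psi)$; rewriting the second summand via $\nabla u_h\cdot M\nabla\psi=(M^T\nabla u_h)\cdot\nabla\psi$ and noting that $M=\dot E_K E_K^{-1}+E_K^{-T}\dot E_K^T$ is symmetric produces exactly the matrix in (\ref{fem-4}). Second, the $c$- and $f$-contributions emerge as $-\int_\Omega[(\nabla c\cdot\dot{X}_h)u_h\psi+c u_h\psi\,\nabla\cdot\dot{X}_h]\,d\V{x}$ and $+\int_\Omega[(\nabla f\cdot\dot{X}_h)\psi+f\psi\,\nabla\cdot\dot{X}_h]\,d\V{x}$; using $\nabla\cdot(c u_h\psi\dot{X}_h)=(\nabla c\cdot\dot{X}_h)u_h\psi+c(\nabla u_h\cdot\dot{X}_h)\psi+c u_h(\nabla\psi\cdot\dot{X}_h)+c u_h\psi\,\nabla\cdot\dot{X}_h$ and the analogous identity for $f\psi\dot{X}_h$, an elementwise divergence theorem converts these into the stated form: the interior-face terms cancel because $c u_h\psi\dot{X}_h$ and $f\psi\dot{X}_h$ are continuous across interior faces, and the $\partial\Omega$-term vanishes because $\psi\in H^1_0(\Omega)$. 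This integration by parts is the only ingredient beyond the pointwise identities of Section~\ref{SEC:mesh-deform}, and is the main obstacle; the rest is careful bookkeeping of the product-rule expansion.
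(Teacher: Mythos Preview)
Your proposal is correct and follows essentially the same route as the paper: pull the perturbed Galerkin equation back to $\mathcal{T}_h$ elementwise, differentiate in $t$ using the product rule together with Lemmas~\ref{lem:Jdot-1}--\ref{lem:FE-1}, and then integrate by parts in the $c$- and $f$-terms to reach the stated form. The only cosmetic difference is that the paper applies the divergence theorem directly over $\Omega$ (legitimate since $c\,u_h\psi\dot X_h$ and $f\psi\dot X_h$ lie in $H^1(\Omega)$), whereas you do it elementwise and argue that interior-face contributions cancel; both lead to the same boundary term, which vanishes because $\psi\in H_0^1(\Omega)$.
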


\begin{proof}
We start with rewriting (\ref{fem-1}) into
\beq
\sum_{K} \int_{K} \left ( a \nabla u_h \cdot \nabla \psi  + (\V{b}\cdot \nabla u_h)\psi  + c u_h \psi\right ) d \V{x}
 = \sum_{K} \int_{K} f \psi d \V{x}, \quad \forall \psi \in V_h .
\label{fem-2}
\eeq
On the deformed mesh $\mathcal{T}_h(t)$, the perturbed FE solution can be expressed as
$\tilde{u}_h = \sum_{i} u_i(t) \tilde{\psi}_i(\V{y},t)$. Moreover, (\ref{fem-2}) becomes
\beq
\sum_{K(t)} \int_{K(t)}  \left ( \tilde{a} \nablat \tilde{u}_h \cdot \nablat \tilde{\psi}  + (\tilde{\V{b}}\cdot \nablat \tilde{u}_h)\tilde{\psi}
 + \tilde{c} \tilde{u}_h \tilde{\psi}\right ) d \V{y}
= \sum_{K(t)} \int_{K(t)} f(\V{y}) \tilde{\psi} d \V{y}, \quad \forall \psi \in V_h(t) 
\label{fem-3}
\eeq
where $V_h(t) = \text{span}\{ \tilde{\psi}_1, \tilde{\psi}_2, ... \}$.
Transforming all integrals into $K(0)$, we obtain
\begin{align*}
& \sum_{K(0)} \int_{K(0)} \left ( \tilde{a} \nablat \tilde{u}_h \cdot \nablat \tilde{\psi}
+ (\tilde{\V{b}} \cdot \nablat \tilde{u}_h)\tilde{\psi}
+ \tilde{c} \tilde{u}_h \tilde{\psi}\right ) J d \V{x}
= \sum_{K(0)} \int_{K(0)} f(F_{K(t)}(\V{x})) \tilde{\psi} J d \V{x} .
\end{align*}
Differentiating both sides of the above equation with respect to $t$ while keeping $\V{x}$ fixed, we have
\begin{align*}
& \sum_{K(0)} \int_{K(0)} \left ( \tilde{a} \dot{(\nablat \tilde{u}_h)}\;\; \cdot \nablat \tilde{\psi}
+ (\tilde{\V{b}} \cdot \dot{(\nablat \tilde{u}_h)}\;\;)\tilde{\psi}
+ \tilde{c} \dot{\tilde{u}}_h \tilde{\psi}\right ) J d \V{x}
\\
& = \quad  - \sum_{K(0)} \int_{K(0)} \left ( \tilde{a} \nablat \tilde{u}_h \cdot \dot{(\nablat \tilde{\psi})}\;\;
+ (\tilde{\V{b}} \cdot \nablat \tilde{u}_h)\dot{\tilde{\psi}}
+ \tilde{c} \tilde{u}_h \dot{\tilde{\psi}}\right ) J d \V{x}
\\
& \quad \quad - \sum_{K(0)} \int_{K(0)} \left ( \dot{\tilde{a}} \nablat \tilde{u}_h \cdot \nablat \tilde{\psi}
+ (\dot{\tilde{\V{b}}} \cdot \nablat \tilde{u}_h)\tilde{\psi}
+ \dot{\tilde{c}} \tilde{u}_h \tilde{\psi}\right ) J d \V{x}
 \\
 & \quad \quad - \sum_{K(0)} \int_{K(0)} \left ( \tilde{a} \nablat \tilde{u}_h \cdot \nablat \tilde{\psi}
+ (\tilde{\V{b}} \cdot \nablat \tilde{u}_h)\tilde{\psi}
+ \tilde{c} \tilde{u}_h \tilde{\psi}\right ) \dot{J} d \V{x}
 \\
 & \quad \quad + \sum_{K(0)} \int_{K(0)} \left ( \dot{\tilde{f}} \tilde{\psi} J + \tilde{f} \dot{\tilde{\psi}} J
 + \tilde{f} \tilde{\psi} \dot{J} \right )d \V{x} .
\end{align*}
Using Lemmas~\ref{lem:Jdot-1}, \ref{lem:phi-1}, \ref{lem:fun-1}, and \ref{lem:FE-1}, taking $t = 0$, and
noticing that $J = 1$ and $\V{y} = \V{x}$ at $t = 0$, we get 
\begin{align*}
& \sum_{K(0)} \int_{K(0)} \left ( {a} (\nabla \dot{u}_h - E_{K(0)}^{-T} \dot{E}_{K(0)}^{T}\nabla u_h) \cdot \nabla \psi
+ ({\V{b}} \cdot (\nabla \dot{u}_h - E_{K(0)}^{-T} \dot{E}_{K(0)}^{T}\nabla u_h)){\psi}
+ {c} \dot{u}_h {\psi}\right ) d \V{x}
\\
& = \quad  - \sum_{K(0)} \int_{K(0)} \left ( {a} \nabla {u}_h \cdot (- E_{K(0)}^{-T} \dot{E}_{K(0)}^{T}\nabla \psi)\right ) d \V{x}
\\
& \quad \quad - \sum_{K(0)} \int_{K(0)} \left ( (\nabla a \cdot \dot{X}_h)  \nabla {u}_h \cdot \nabla {\psi}
+ (\dot{X}_h \cdot \nabla \V{b} \cdot \nabla {u}_h) {\psi}
+ (\nabla c\cdot \dot{X}_h) {u}_h {\psi}\right ) d \V{x}
 \\
 & \quad \quad - \sum_{K(0)} \int_{K(0)} \left ( {a} \nabla {u}_h \cdot \nabla {\psi}
+ ({\V{b}} \cdot \nabla {u}_h){\psi} + {c} {u}_h {\psi}\right ) (\nabla \cdot \dot{X}_h) d \V{x}
 \\
 & \quad \quad + \sum_{K(0)} \int_{K(0)} \left ( (\nabla f \cdot \dot{X}_h) \psi 
 + f {\psi} (\nabla \cdot \dot{X}_h) \right )d \V{x} .
\end{align*}
Noticing that $K(0) = K$, we can rewrite the above equation into
\begin{align*}
& \int_{\Omega} \left ( {a} \nabla \dot{u}_h \cdot \nabla \psi
+ ({\V{b}} \cdot \nabla \dot{u}_h){\psi} + {c} \dot{u}_h {\psi}\right ) d \V{x}
\\
& =  \sum_{K} \int_{K} \left ( {a} \nabla u_h \cdot  (\dot{E}_{K}E_{K}^{-1}+ E_{K}^{-T} \dot{E}_{K}^{T}) \cdot \nabla \psi
+ ({\V{b}} \cdot E_{K}^{-T} \dot{E}_{K}^{T} \cdot \nabla u_h){\psi} \right ) d \V{x}
\\
& \quad \quad - \int_{\Omega} \left ( (\nabla a \cdot \dot{X}_h)  \nabla {u}_h \cdot \nabla {\psi}
+ (\dot{X}_h \cdot \nabla \V{b} \cdot \nabla {u}_h) {\psi}
+ (\nabla c\cdot \dot{X}_h) {u}_h {\psi}\right ) d \V{x}
 \\
 & \quad \quad - \int_{\Omega} \left ( {a} \nabla {u}_h \cdot \nabla {\psi}
+ ({\V{b}} \cdot \nabla {u}_h){\psi} + {c} {u}_h {\psi}\right ) (\nabla \cdot \dot{X}_h) d \V{x}
 \\
 & \quad \quad + \int_{\Omega} \left ( (\nabla f \cdot \dot{X}_h) \psi 
 + f {\psi} (\nabla \cdot \dot{X}_h) \right )d \V{x} .
\end{align*}
This can be rewritten into
\begin{align*}
& \int_{\Omega} \left ( {a} \nabla \dot{u}_h \cdot \nabla \psi
+ ({\V{b}} \cdot \nabla \dot{u}_h){\psi} + {c} \dot{u}_h {\psi}\right ) d \V{x}
\\
& =  \sum_{K} \int_{K} \left ( {a} \nabla u_h \cdot  (\dot{E}_{K}E_{K}^{-1}+ E_{K}^{-T} \dot{E}_{K}^{T}) \cdot \nabla \psi
+ ({\V{b}} \cdot E_{K}^{-T} \dot{E}_{K}^{T} \cdot \nabla u_h){\psi} \right ) d \V{x}
\\
& \quad \quad - \int_{\Omega} \left ( (\nabla a \cdot \dot{X}_h)  (\nabla {u}_h \cdot \nabla {\psi})
+ a ( \nabla {u}_h \cdot \nabla {\psi}) (\nabla \cdot \dot{X}_h) \right ) d \V{x}
\\
& \quad \quad - \int_{\Omega} \left ( 
 (\dot{X}_h \cdot \nabla \V{b} \cdot \nabla {u}_h) {\psi} 
+ ({\V{b}} \cdot \nabla {u}_h){\psi}  (\nabla \cdot \dot{X}_h) \right ) d \V{x}
\\
& \quad \quad - \int_{\Omega} \left ( 
(\nabla c\cdot \dot{X}_h) {u}_h {\psi} +  {c} {u}_h {\psi} (\nabla \cdot \dot{X}_h) \right ) d \V{x}
 \\
 & \quad \quad + \int_{\Omega} \left ( (\nabla f \cdot \dot{X}_h) \psi 
 + f {\psi} (\nabla \cdot \dot{X}_h) \right )d \V{x} .
\end{align*}
Using the divergence theorem, we get
\begin{align}
& \int_{\Omega} \left ( {a} \nabla \dot{u}_h \cdot \nabla \psi
+ ({\V{b}} \cdot \nabla \dot{u}_h){\psi} + {c} \dot{u}_h {\psi}\right ) d \V{x}
\nonumber
\\
& =  \sum_{K} \int_{K} \left ( {a} \nabla u_h \cdot  (\dot{E}_{K}E_{K}^{-1}+ E_{K}^{-T} \dot{E}_{K}^{T}) \cdot \nabla \psi
+ ({\V{b}} \cdot E_{K}^{-T} \dot{E}_{K}^{T} \cdot \nabla u_h){\psi} \right ) d \V{x}
\nonumber
\\
& \quad \quad - \int_{\Omega} \left ( (\nabla a \cdot \dot{X}_h)  (\nabla {u}_h \cdot \nabla {\psi})
+ a ( \nabla {u}_h \cdot \nabla {\psi}) (\nabla \cdot \dot{X}_h) \right ) d \V{x}
\nonumber
\\
& \quad \quad - \int_{\Omega} \left ( 
 (\dot{X}_h \cdot \nabla \V{b} \cdot \nabla {u}_h) {\psi} 
+ ({\V{b}} \cdot \nabla {u}_h){\psi}  (\nabla \cdot \dot{X}_h) \right ) d \V{x}
\nonumber
\\
& \quad \quad - \int_{\partial \Omega} c u_h \psi \dot{X}_h\cdot \V{n} d S + \int_{\Omega} \left (
c \psi (\nabla u_h \cdot \dot{X}_h) +  {c} {u}_h (\nabla \psi \cdot \dot{X}_h) \right ) d \V{x}
\nonumber
 \\
 & \quad \quad + \int_{\partial \Omega} f \psi \dot{X}_h\cdot \V{n} d S - \int_{\Omega} f (\nabla \psi \cdot \dot{X}_h) d \V{x} ,
 \label{fem-7}
\end{align}
where $\V{n}$ is the outward unit normal of $\partial \Omega$. We obtain (\ref{fem-4}) by noticing that the surface integrals vanish
since $\psi = 0$ on $\partial \Omega$.
\end{proof}

\begin{thm}
\label{thm:fem-2}
Assume that $\mathcal{T}_h$ is a simplicial mesh with the minimum element height $\min_K a_K > 0$. Then, the material derivative
of the FE solution to BVP (\ref{bvp-1}) and (\ref{bc-1}) due to mesh deformation is bounded by
\begin{align}
a_0 \| \nabla \dot{u}_h \|_{L^2(\Omega)} & \le \| f \|_{L^2(\Omega)}\left ( 1 + C_{\Omega} \|\nabla a \|_{L^\infty(\Omega)}
+ C_{\Omega}^2 \|\nabla \V{b} \|_{L^\infty(\Omega)}  + 2 C_{\Omega}^2 \| c \|_{L^\infty(\Omega)} \right ) \| \dot{X} \|_{L^\infty(\Omega)}
\nonumber
\\
& \quad  +  \| f \|_{L^2(\Omega)}\left ( 3 d C_{\Omega} \| a \|_{L^\infty(\Omega)} + 2 d C_{\Omega}^2 \| \V{b} \|_{L^\infty(\Omega)}\right )
 \| \nabla \dot{X} \|_{L^\infty(\Omega)} \max_K \frac{h_K}{a_K} ,
\label{fem-5}
\end{align}
where $C_{\Omega}$ is the constant appearing in Poincar\'{e}'s inequality that depends only on $\Omega$.
\end{thm}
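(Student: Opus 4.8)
The natural approach is an energy argument built on Theorem~\ref{thm:fem-1}: take $\psi=\dot{u}_h$ in the identity (\ref{fem-4}). By the coercivity bound (\ref{bvp-3}), applied to $\dot{u}_h\in V_h\subset V$, the left-hand side is then bounded below by $a_0\|\nabla\dot{u}_h\|_{L^2(\Omega)}^2$. The entire proof reduces to estimating each of the finitely many terms on the right-hand side of (\ref{fem-4}) by a constant times $\|\nabla\dot{u}_h\|_{L^2(\Omega)}$ times the quantities appearing in (\ref{fem-5}); one then divides through by $\|\nabla\dot{u}_h\|_{L^2(\Omega)}$ (the estimate being trivial when it vanishes) and collects constants.

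A preliminary ingredient is the a priori bound $a_0\|\nabla u_h\|_{L^2(\Omega)}\le C_\Omega\|f\|_{L^2(\Omega)}$, obtained by taking $\psi=u_h$ in (\ref{fem-1}), using (\ref{bvp-3}), the Cauchy--Schwarz inequality, and Poincar\'e's inequality $\|v\|_{L^2(\Omega)}\le C_\Omega\|\nabla v\|_{L^2(\Omega)}$; this is what converts the $u_h$-dependence of the right-hand side of (\ref{fem-4}) into the $f$-dependence of (\ref{fem-5}). The only genuinely $f$-dependent term of (\ref{fem-4}), namely $-\int_\Omega f(\nabla\dot{u}_h\cdot\dot{X}_h)\,d\V{x}$, is controlled directly by $\|f\|_{L^2(\Omega)}\|\dot{X}\|_{L^\infty(\Omega)}\|\nabla\dot{u}_h\|_{L^2(\Omega)}$ via (\ref{Xdot-1}) and supplies the leading ``$1$'' in (\ref{fem-5}).

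The remaining terms are estimated one by one with Cauchy--Schwarz and H\"older, using three tools from Section~\ref{SEC:mesh-deform}: $\|\dot{X}_h\|_{L^\infty(\Omega)}\le\|\dot{X}\|_{L^\infty(\Omega)}$ (Lemma~\ref{lem:Xh-1}, (\ref{Xdot-1})) for the terms carrying a bare $\dot{X}_h$; $\|\nabla\cdot\dot{X}_h\|_{L^\infty(\Omega)}\le d\|\nabla\dot{X}\|_{L^\infty(\Omega)}\max_K\frac{h_K}{a_K}$ (Lemma~\ref{lem:Xh-1}, (\ref{Xdot-2})) for the terms carrying $\nabla\cdot\dot{X}_h$; and $\|\dot{E}_K\|_2\|E_K^{-1}\|_2\le\frac{d\,h_K}{a_K}\|\nabla\dot{X}\|_{L^\infty(\Omega)}$ (Lemma~\ref{lem:EK-1}, (\ref{EK-2})), applied elementwise, for the edge-matrix factors $\dot{E}_K E_K^{-1}+E_K^{-T}\dot{E}_K^{T}$ and $E_K^{-T}\dot{E}_K^{T}$ (note $\|E_K^{-T}\dot{E}_K^{T}\|_2=\|\dot{E}_K E_K^{-1}\|_2$, so the symmetrized factor in the $a$-term costs twice as much). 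Each time a factor $\psi=\dot{u}_h$ or $u_h$ occurs without a gradient one spends an extra Poincar\'e constant $C_\Omega$; this accounts for the single $C_\Omega$ on the $\|\nabla a\|_{L^\infty}$ term, the $C_\Omega^2$ on the $\|\nabla\V{b}\|_{L^\infty}$ and $\|c\|_{L^\infty}$ terms (one $C_\Omega$ from Poincar\'e on the test function and one from the a priori bound on $\|\nabla u_h\|_{L^2}$), and likewise for the second group, where the factor $3d$ on $\|a\|_{L^\infty}$ is $2d+d$ from the symmetrized edge-matrix term and the $a\,\nabla\cdot\dot{X}_h$ term, and $2d$ on $\|\V{b}\|_{L^\infty}$ is $d+d$ from the $E_K^{-T}\dot{E}_K^{T}$ term and the $(\V{b}\cdot\nabla u_h)\psi\,\nabla\cdot\dot{X}_h$ term. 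Sorting the resulting bounds into the $\|\dot{X}\|_{L^\infty(\Omega)}$ group and the $\|\nabla\dot{X}\|_{L^\infty(\Omega)}\max_K\frac{h_K}{a_K}$ group, substituting the a priori bound, and dividing by $\|\nabla\dot{u}_h\|_{L^2(\Omega)}$ gives (\ref{fem-5}).

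The work here is not in any individual estimate --- each is a one-line use of Cauchy--Schwarz together with one of (\ref{Xdot-1}), (\ref{Xdot-2}), (\ref{EK-2}) --- but in the bookkeeping: for each of the roughly ten terms of (\ref{fem-4}) one must correctly decide whether its mesh-deformation factor is $\dot{X}_h$ (hence it joins the $\|\dot{X}\|_{L^\infty}$ group) or one of $\nabla\cdot\dot{X}_h$, $\dot{E}_K E_K^{-1}$ (hence the $\|\nabla\dot{X}\|_{L^\infty}\max_K h_K/a_K$ group), and how many powers of $C_\Omega$ it carries, so that the constants assemble precisely into the form of (\ref{fem-5}). One routine point to watch is that the element sum $\sum_K$ in the first line of (\ref{fem-4}) must be handled before passing to $\max_K h_K/a_K$, using $\sum_K\|\nabla u_h\|_{L^2(K)}\|\nabla\dot{u}_h\|_{L^2(K)}\le\|\nabla u_h\|_{L^2(\Omega)}\|\nabla\dot{u}_h\|_{L^2(\Omega)}$ by Cauchy--Schwarz in the element index.
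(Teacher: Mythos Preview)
Your proposal is correct and follows essentially the same argument as the paper: test (\ref{fem-4}) with $\psi=\dot{u}_h$, use coercivity (\ref{bvp-3}) on the left, estimate each right-hand term by Cauchy--Schwarz/Poincar\'e together with Lemmas~\ref{lem:Xh-1} and \ref{lem:EK-1}, and substitute the a priori bound on $\|\nabla u_h\|_{L^2(\Omega)}$ obtained from (\ref{fem-1}). Your term-by-term bookkeeping of the constants (the $3d$, $2d$, and the powers of $C_\Omega$) is exactly what the paper's terse proof leaves implicit; the only cosmetic discrepancy is that the paper records the a priori bound as $\|\nabla u_h\|_{L^2(\Omega)}\le C_\Omega\|f\|_{L^2(\Omega)}$ rather than your (sharper) $a_0\|\nabla u_h\|_{L^2(\Omega)}\le C_\Omega\|f\|_{L^2(\Omega)}$, which is what makes the constants in (\ref{fem-5}) come out as stated.
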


\begin{proof}
Recall that $\dot{u}_h \equiv \sum_i \dot{u}_i(0) \psi_i$ belongs to $V_h$. Taking $\psi = \dot{u}_h$ in (\ref{fem-4})
and using the Cauchy-Schwarz inequality, Poincar\'{e}'s inequality, and (\ref{bvp-3}), we get
\begin{align*}
a_0 \| \nabla \dot{u}_h \|_{L^2(\Omega)}
& \le \| a \|_{L^\infty(\Omega)} \| \nabla u_h\|_{L^2(\Omega)} \left ( \| \nabla \cdot \dot{X} \|_{L^\infty(\Omega)}
+ 2 \max_K \| \dot{E}_K E_K^{-1} \|_2 \right )
\\
& \quad + \| \nabla a \|_{L^\infty(\Omega)} \| \nabla u_h\|_{L^2(\Omega)} \| \dot{X}_h \|_{L^\infty(\Omega)}
\\
& \quad + C_{\Omega} \| \V{b} \|_{L^\infty(\Omega)} \| \nabla u_h\|_{L^2(\Omega)} \left ( \| \nabla \cdot \dot{X} \|_{L^\infty(\Omega)}
+ \max_K \| \dot{E}_K E_K^{-1} \|_2 \right )
\\
& \quad + C_{\Omega} \| \nabla \V{b} \|_{L^\infty(\Omega)} \| \nabla u_h\|_{L^2(\Omega)} \| \dot{X}_h \|_{L^\infty(\Omega)}
+ 2 C_{\Omega} \| c \|_{L^\infty(\Omega)} \| \nabla u_h\|_{L^2(\Omega)} \| \dot{X}_h \|_{L^\infty(\Omega)}
\\
& \quad + \| f \|_{L^2(\Omega)}  \| \dot{X}_h \|_{L^\infty(\Omega)} .
\end{align*}
Taking $\psi = u_h$ in (\ref{fem-1}) and using the Cauchy-Schwarz inequality and Poincar\'{e}'s inequality, we get
\[
\| \nabla u_h \|_{L^2(\Omega)} \le C_{\Omega} \| f \|_{L^2(\Omega)} .
\]
Combining the above results and using Lemmas~\ref{lem:Xh-1} and \ref{lem:EK-1}, we obtain (\ref{fem-5}).
\end{proof}

We emphasize that the bound in (\ref{fem-5}) has been obtained without assuming that the mesh is regular. In fact,
the mesh can be arbitrary, isotropic or anisotropic, uniform or nonuniform, as long as it is simplicial and
has a positive minimum element height.
For meshes with large aspect ratio, the factor $\max_K h_K/a_K$ is large and the bound in (\ref{fem-5})
is more sensitive to $\| \nabla \dot{X}\|_{L^2(\Omega)}$.
Moreover, the bound shows that the size and gradient of the mesh velocity field can have effects
on the FE solution. The former is insensitive to the shape of mesh elements whereas the effects from the gradient
of the mesh velocity are proportional to the maximum element aspect ratio $\max_K h_K/a_K$.
Furthermore, (\ref{fem-5}) is homogeneous about time derivatives. Thus, $\dot{X}$ can be viewed as mesh displacement instead of mesh velocity. The bound shows that the change in the FE solution is small when $\dot{X}$ and $\nabla \dot{X}$ are small, implying
a continuous dependence of the FE solution on the mesh.

\begin{rem}
\label{rem:fem-1}
If the mesh velocity field is not smooth, from Remarks~\ref{rem:Xdot-1} and (\ref{rem:EK-1}) and inequality (\ref{fem-7})
we can see that (\ref{fem-5}) can be replaced by
\begin{align}
a_0 \| \nabla \dot{u}_h \|_{L^2(\Omega)} & \le \| f \|_{L^2(\Omega)}\left ( 1 + C_{\Omega} \|\nabla a \|_{L^\infty(\Omega)}
+ C_{\Omega}^2 \|\nabla \V{b} \|_{L^\infty(\Omega)}  + 2 C_{\Omega}^2 \| c \|_{L^\infty(\Omega)} \right ) \| \dot{X} \|_{L^\infty(\Omega)}
\nonumber
\\
& \quad  +  \| f \|_{L^2(\Omega)}\left ( 3 d C_{\Omega} \| a \|_{L^\infty(\Omega)} + 2 d C_{\Omega}^2 \| \V{b} \|_{L^\infty(\Omega)}\right )
 \| \dot{X} \|_{L^\infty(\Omega)} \frac{1}{\min\limits_K a_K} .
\label{fem-6}
\end{align}
For a given mesh, $\min\limits_K a_K$ is fixed. Thus, (\ref{fem-6}) shows that the FE solution depends continuously
on the mesh even in the situation where the mesh velocity field is not smooth.
\end{rem}

% section 5
\section{A numerical example}
\label{SEC:numerics}

In this section we present some numerical results for a two-dimensional example in the form (\ref{bvp-1}) where
$\Omega = (0,1)\times (0,1)$, $a = 1$, $\V{b} = (1,2)^T$, $c=0$, and $f$ is chosen such that the exact solution
of the BVP is given by $u = \sin(2\pi x) \sin(3 \pi y)$. We use linear finite elements and choose the base mesh to
be a triangular mesh by first partitioning $\Omega$ into $N\times N$ small rectangles and then dividing each small rectangle
into four triangles using its diagonal lines. We first consider a smooth velocity field,
\[
\dot{X} = \sin(\pi x) \sin(2 \pi y)
\]
and deform the mesh through (\ref{xt-1}). The results are listed in Table~\ref{tab:smooth}. We can see that
$\| \nabla \dot{u}_h\|_{L^2(\Omega)}$ is a linear function of $t$. Moreover, the error is almost independent of
the mesh size. This is consistent with (\ref{fem-5}) since the only mesh-dependent factor $\max_K h_K/a_K$ is
constant when the mesh is being refined for the current situation.

Next we consider a random mesh velocity field. In this case, we generate $\dot{X}_i = \dot{X}(\V{x}_i)$ ($i = 1, 2, ...$)
using a uniformly distributed pseudorandom number generator and scale them to the range $(-1,1)$. Then we perturb
the mesh through (\ref{xt-1}). In this case, the velocity field is not smooth. The results are listed in Table~\ref{tab:random}
where $\| \nabla \dot{u}_h\|_{L^2(\Omega)}$ is shown as the average of the corresponding values obtained
with twenty repeated runs for each pair of $t$ and the mesh size. From the table we can see that $\| \nabla \dot{u}_h\|_{L^2(\Omega)}$
is still linear about $t$. Interestingly, $\| \nabla \dot{u}_h\|_{L^2(\Omega)}$ is mesh-dependent: its values are about twice larger
for the $80\times 80$ mesh than those for the $40\times 40$ mesh. This can be explained using (\ref{fem-6}) which contains
a mesh-dependent factor $1/(\min_K a_K)$. This factor is about twice larger for the $80\times 80$ mesh than
for the $40\times 40$ mesh. Moreover, we can see that $\| \nabla \dot{u}_h\|_{L^2(\Omega)}$
in Table~\ref{tab:random} are larger than those
in Table~\ref{tab:smooth}, which reflects the nature of the bounds in (\ref{fem-5}) and (\ref{fem-6}) for smooth
and nonsmooth velocity fields.

\begin{table}[htb]
\begin{center}
\caption{$\| \nabla \dot{u}_h\|_{L^2(\Omega)}$ for smooth velocity field.}
\vspace{2mm}
\begin{tabular}{|c|c|c|c|c|c|}\hline \hline
Mesh size & $t =$ 1e-6 & $t =$ 1e-5 & $t =$ 1e-4 & $t =$ 1e-3 & $t =$ 1e-2 \\ \hline
$40\times 40$ & 3.759e-5 & 3.759e-4 & 3.759e-3 & 3.759e-2 & 3.760e-1 \\
$80\times 80$ & 3.770e-5 & 3.770e-4 & 3.770e-3 & 3.771e-2 & 3.772e-1 \\
\hline \hline
\end{tabular}
\label{tab:smooth}
\end{center}
\end{table}

\begin{table}[htb]
\begin{center}
\caption{$\| \nabla \dot{u}_h\|_{L^2(\Omega)}$ (average) for random velocity field. The shown values are obtained
as the average of the corresponding values obtained
with twenty repeated runs for each pair of $t$ and the mesh size.}
\vspace{2mm}
\begin{tabular}{|c|c|c|c|c|}\hline \hline
Mesh size & $t =$ 1e-6 & $t =$ 1e-5 & $t =$ 1e-4 & $t =$ 1e-3 \\ \hline
$40\times 40$ & 3.641e-4 & 3.640e-3 & 3.633e-2 & 3.656e-1 \\
$80\times 80$ & 7.367e-4 & 7.341e-3 & 7.353e-2 & 7.432e-1 \\
\hline \hline
\end{tabular}
\label{tab:random}
\end{center}
\end{table}

% section 6
\section{Conclusions}
\label{SEC:conclusions}

We have presented an analysis on the mesh sensitivity for the finite element solution of
a boundary value problem of linear elliptic partial differential equations. The main result is stated in
Theorem~\ref{thm:fem-2} where a bound is obtained for the change in the finite element solution
in terms of the mesh deformation and its gradient. A similar bound is given in (\ref{fem-6}) when the mesh velocity field
is nonsmooth. These results show how the finite element solution depends continuously on the mesh.
The results have been obtained in any dimension and for arbitrary unstructured simplicial
meshes, general linear elliptic partial differential equations, and general finite element approximations.

%---------------------------------------------------
%\bibliographystyle{plain}
%\bibliography{/Users/huang/tex/bib/mmesh}

\end{document}